\allowdisplaybreaks \numberwithin{equation}{section}
\theoremstyle{plain}
\newtheorem{theorem}{Theorem}[section]
\newtheorem{assumption}[theorem]{Assumption}
\theoremstyle{definition}
\newtheorem{defi}[theorem]{Definition}
\newtheorem{problem}[theorem]{Problem}
\def\L{{\mathcal L}}
\def\H{\mathcal H}
\begin{document}

\title[Time-fractional pseudo-parabolic equation]{Direct and Inverse problems for time-fractional pseudo-parabolic equations}

\author[M. Ruzhansky]{Michael Ruzhansky}
\address{
  Michael Ruzhansky:
  \endgraf
Department of Mathematics: Analysis,
Logic and Discrete Mathematics
  \endgraf
Ghent University, Belgium
  \endgraf
 and
  \endgraf
 School of Mathematical Sciences
 \endgraf
Queen Mary University of London
\endgraf
United Kingdom
\endgraf
  {\it E-mail address} {\rm michael.ruzhansky@ugent.be}
 }

\author[D. Serikbaev]{Daurenbek Serikbaev}
\address{
  Daurenbek Serikbaev:
   \endgraf
  Department of Mathematics: Analysis, Logic and Discrete Mathematics
  \endgraf
  Ghent University, Belgium
  \endgraf
  and
  \endgraf
   Al--Farabi Kazakh National University
  \endgraf
  Almaty, Kazakhstan
  \endgraf
    and
  \endgraf
   Institute of Mathematics and Mathematical Modeling
  \endgraf
  Almaty, Kazakhstan
  \endgraf
  {\it E-mail address} {\rm daurenbek.serikbaev@ugent.be}
  }

\author[N. Tokmagambetov]{Niyaz Tokmagambetov}
\address{
  Niyaz Tokmagambetov:
   \endgraf
  Department of Mathematics: Analysis, Logic and Discrete Mathematics
  \endgraf
  Ghent University, Belgium
  \endgraf
  and
  \endgraf
   Al--Farabi Kazakh National University
  \endgraf
  Almaty, Kazakhstan
  \endgraf
  {\it E-mail address} {\rm tokmagambetov@math.kz}
 }
\author[B. T. Torebek]{Berikbol T. Torebek}
\address{
  Berikbol T. Torebek:
   \endgraf
  Department of Mathematics: Analysis, Logic and Discrete Mathematics
  \endgraf
  Ghent University, Belgium
  \endgraf
  and
  \endgraf
   Al--Farabi Kazakh National University
  \endgraf
  Almaty, Kazakhstan
  \endgraf
    and
  \endgraf
   Institute of Mathematics and Mathematical Modeling
  \endgraf
  Almaty, Kazakhstan
  \endgraf
  {\it E-mail address} {\rm berikbol.torebek@ugent.be}
  }

\date{\today}

\thanks{The authors were supported by the FWO Odysseus 1 grant G.0H94.18N: Analysis and Partial Differential Equations. MR was supported in parts by the EPSRC Grant
EP/R003025/1, by the Leverhulme Research Grant RPG-2017-151.}

\keywords{Pseudo-parabolic equation, Caputo fractional derivative, weak solution, direct problem, inverse problem.}
\subjclass[2010]{35R30, 	35G15, 45K05}

\maketitle

\begin{abstract}
The purpose of this paper is to establish the solvability results to direct and inverse problems for time-fractional pseudo-parabolic equations with the self-adjoint operators. We are especially interested in proving existence and uniqueness of the solutions in the abstract setting of Hilbert spaces.
\end{abstract}
\tableofcontents

\section{Introduction}

The problems of determination of temperature at interior points of a region when the initial and boundary conditions along with diffusion source term are specified are known as direct diffusion conduction problems. In many physical problems, determination of coefficients or the right-hand side (the source term, in case of the diffusion equation) in a differential equation from some available information is required; these problems are known as inverse problems.

Inverse source problems for the diffusion, sub-diffusion and for other types of equations are well studied. There are numerous works published only in recent years in this area (for example, see \cite{Kirane, Lesnic, Yam19, Run18, Slod}). However, inverse problems for pseudo-parabolic equations and for their fractional analogues have been studied relatively little (see \cite{Janno, LT11, LT11b, LV19, Run80, RTT18}).

The inverse problem of determining the coefficient and the
right hand side of a pseudo-parabolic equation from local over defined
states has important applications in various fields of applied science and engineering.
The study of inverse problems for pseudo-parabolic equations began in the 1980s by Rundell (see \cite{Run80}).

Let $\mathcal{H}$ be a separable Hilbert space and let $\mathcal{L},\,\mathcal{M}$ be operators with the corresponding discrete spectra $\{\lambda_\xi\}_{\xi\in\mathcal{I}},\;\{\mu_\xi\}_{\xi\in\mathcal{I}}$ on $\mathcal{H}$, where $\mathcal{I}$ is a countable
set.

In this paper we consider solvability of an inverse source problem for the following pseudo-parabolic equation
\begin{equation}\label{EQ:PP}
\mathcal{D}_t^\alpha [u(t)+\mathcal{L}u(t)]+\mathcal{M}u(t)=f(t) \,\, \hbox{in} \,\, \H,
\end{equation}
for $0<t<T<\infty$, with initial data
\begin{equation}\label{C:IN}
    u(0)=\varphi\in\H,
\end{equation}
and final condition
\begin{equation}\label{C:FIN}
    u(T)=\psi\in\H.
\end{equation}
Here $\mathcal{D}_t^\alpha$ is the Caputo fractional derivative of order $0 <\alpha\leq1$.

In the particular case $\alpha=1,$ the equation \eqref{EQ:PP} coincides with the classical pseudo-parabolic equation with some differential operators $\mathcal{L}$ and $\mathcal{M}.$ The energy for the isotropic material  can be modeled by a pseudo-parabolic equation \cite{CG68}. Some wave processes \cite{BBM72}, filtration of the two-phase flow in porous media with the dynamic capillary pressure \cite{Baren} are also modeled by pseudo-parabolic equations. Time-fractional pseudo-parabolic equation \eqref{EQ:PP} occurs in the study of flows of the Oldroyd-B fluid, one of the most important classes for dilute solutions of polymers \cite{Fet, Tong}.

In this paper, we consider direct and inverse problems for the time-fractional pseudo-parabolic equation with different abstract operators. We seek generalized solutions to these problems in a form of series expansions using the elements of nonharmonic analysis (see \cite{RT16, RTT18}) and we also examine the convergence of the obtained series solutions. The main results on well-posedness of direct and inverse problems are formulated in three theorems.

We will be making the following assumption:
\begin{assumption}\label{A1}
We assume that {\it the operators $\mathcal{L}$ and $\mathcal{M}$ are diagonalisable (can be written in the infinite dimensional matrix form) with respect to some basis $\{e_\xi\}_{\xi\in\mathcal{I}}$ of the separable Hilbert space $\H$ with the eigenvalues $\lambda_\xi\in\mathbb R: \lambda_\xi\geq c_{L} > 0$ and $\mu_\xi\in\mathbb R: \mu_\xi\geq c_{M} > 0$ for all $\xi\in\mathcal{I}$, respectively. Here $c_{L}$ and $c_{M}$ are some constants, $\mathcal{I}$ is some countable set.}

\end{assumption}

We will be sometimes also making the following assumption with $\mathcal{I}=\mathbb N^k$ or $\mathcal{I}=\mathbb Z^k$ for some $k$:

\begin{assumption}\label{A2}
In further calculus for our analysis we will also require that $\lambda_\xi\to\infty$ and $\mu_\xi\to\infty$ as $|\xi|\to\infty$.
Moreover, we will assume that $|\lambda_\xi|=O(|\mu_\xi|^\kappa)$ as $|\xi|\to\infty$ for some $\kappa>0$.
\end{assumption}

\subsection{Preliminaries}

Now, for the formulation of problems we need to define fractional differentiation operators.

\begin{defi}\label{defi: riemann int}
The Riemann-Liouville fractional integral $I^\alpha$ of order $0<\alpha<1$ for an integrable function $f$ is defined by
$$
I^\alpha[f](t)=\frac{1}{\Gamma(\alpha)}\int_c^t{(t-s)^{\alpha-1}f(s)ds},\; t\in[c,d],
$$
where $\Gamma$ denotes the Euler gamma function.

The Caputo fractional derivative of order $0<\alpha<1$ of a differentiable function $f$ is defined by
$$
\mathcal{D}_t^\alpha[f](t)=I^{1-\alpha}[f'(t)]=\frac{1}{\Gamma(1-\alpha)}\int_c^t\frac{f'(s)}{(t-s)^\alpha}ds,\; t\in[c,d].
$$
\end{defi}
For more information see \cite{KST06}.

In what follows, we will widely use the properties of the
Mittag-Leffler type function (see \cite{LG99})
$$
E_{\alpha,\beta}(z)=\sum_{m=0}^{\infty}\frac{z^m}{\Gamma(\alpha m+\beta)}.
$$
In \cite{Sim14} the following estimate for the Mittag-Leffler function is proved, when $0<\alpha<1$ (not true for $\alpha\geq 1$)
\begin{equation}\label{EST: Mittag AB}
\frac{1}{1+\Gamma(1-\alpha)z}\leq E_{\alpha,1}(-z)\leq\frac{1}{1+\Gamma(1+\alpha)^{-1}z},\;z>0.
\end{equation}
Thus, it follows that
\begin{equation}\label{EST: Mittag}
0<E_{\alpha,1}(-z)<1,\;z>0.
\end{equation}

\section{Direct problem} We consider the pseudo-parabolic equation
\begin{equation}\label{EQ DL:Pseudo}
  \mathcal{D}_t^\alpha[u(t)+\mathcal{L}u(t)]+\mathcal{M}u(t)=f(t),
\end{equation}
for $0<t<T<\infty$, with the Cauchy condition
\begin{equation}\label{CON DL:In}
u(0)=\varphi\in\H.
\end{equation}

\begin{defi}
Let $X$ be a separable Hilbert space.
\begin{itemize}
    \item The space $C^\alpha([0,T];X),\; 0<\alpha\leq 1$ is the space of all continuous functions $g:[0,T]\rightarrow X$ with also continuous $\mathcal{D}^\alpha_t g:[0,T]\rightarrow X$, such that
$$
\|g\|_{C^\alpha([0,T]; X)}=\|g\|_{C([0,T]; X)}+\|\mathcal{D}^\alpha_t g\|_{C([0,T]; X)} < \infty.
$$
    \item The space $W^\alpha([0,T];X),\; 0<\alpha\leq 1$ is the space of all $L^2$-integrable functions $g:[0,T]\rightarrow X$ with $L^2$-integrable $\mathcal{D}^\alpha_t g:[0,T]\rightarrow X$, such that
$$
\|g\|_{W^\alpha([0,T]; X)}=\|g\|_{L^2([0,T]; X)}+\|\mathcal{D}^\alpha_t g\|_{L^2([0,T]; X)} < \infty.
$$
\end{itemize}
\end{defi}

A generalised solution of the direct problem \eqref{EQ DL:Pseudo}-\eqref{CON DL:In} is the function $u\in L^2([0,T]; \mathcal{H}^{1}_\mathcal{L}) \cap L^2([0,T]; \mathcal{H}^{1}_\mathcal{M}) \cap W^{\alpha}([0,T]; \mathcal{H}^{1}_\mathcal{L})$. Here we define $\mathcal{H}^{l, m}_{\L,\mathcal{M}}$ as
\begin{equation}
\label{Spaces}
\mathcal{H}^{l, m}_{\L,\mathcal{M}}:=\{u\in\mathcal{H}:\,\mathcal{L}^{l}\mathcal{M}^{m}u\in\mathcal{H}\}, \end{equation}
for any $l, m \in \mathbb R$. In view of this we can define $\H_\L^l,\,\H_\mathcal{M}^m$ correspondingly
$$
\mathcal{H}^{l}_{\L}:=\{u\in\mathcal{H}:\,\mathcal{L}^{l}u\in\mathcal{H}\},
$$
$$
\mathcal{H}^{m}_{\mathcal{M}}:=\{u\in\mathcal{H}:\,\mathcal{M}^{m}u\in\mathcal{H}\},
$$
for any $l, m \in \mathbb R$.

\subsection{Case I: $1/2<\alpha<1$} 

For Problem \eqref{EQ DL:Pseudo}-\eqref{CON DL:In}, the following theorem holds true.

\begin{theorem}\label{Theorem D-L}
Let $1/2<\alpha<1.$ Suppose that Assumption \ref{A1} holds. Let $\varphi\in \mathcal{H}^{1}_\mathcal{L}\cap\mathcal{H}^{1}_\mathcal{M}$ and $f\in L^2([0,T];\mathcal{H})\cap L^{2}([0,T],\mathcal{H}_{\mathcal L, \mathcal M}^{-1, 1})$. Then there exists a unique solution $u(t)$ of Problem \eqref{EQ DL:Pseudo}-\eqref{CON DL:In} such that $u\in  L^2([0,T];\mathcal{H}^{1}_\mathcal{M}) \cap W^{\alpha}([0,T]; \mathcal{H}^{1}_\mathcal{L})$. This solution can be written in the form
\begin{equation*}
\begin{split}
u(t)&=\sum_{\xi\in\mathcal{I}}\varphi_\xi E_{\alpha,1}\left(-\frac{\mu_\xi}{1+\lambda_\xi}t^\alpha\right)e_\xi\\
&+\sum_{\xi\in\mathcal{I}}\left[ \frac{1}{1+\lambda_\xi} \int_0^t s^{\alpha-1} E_{\alpha,\alpha}\left(-\frac{\mu_\xi}{1+\lambda_\xi}s^\alpha\right) f_\xi(t-s) ds\right]e_\xi,
\end{split}
\end{equation*}
where $\varphi_\xi=(\varphi,e_\xi)_\mathcal{H}$, $f_\xi(t)=(f(t),e_\xi)_\mathcal{H}$.
\end{theorem}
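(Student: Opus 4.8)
The plan is to use separation of variables relative to the basis $\{e_\xi\}_{\xi\in\mathcal{I}}$ furnished by Assumption \ref{A1}. First I would expand the sought solution as $u(t)=\sum_{\xi\in\mathcal{I}} u_\xi(t)e_\xi$ and the data as $\varphi=\sum_\xi \varphi_\xi e_\xi$, $f(t)=\sum_\xi f_\xi(t)e_\xi$. Since $\mathcal{L}e_\xi=\lambda_\xi e_\xi$ and $\mathcal{M}e_\xi=\mu_\xi e_\xi$, projecting \eqref{EQ DL:Pseudo} onto $e_\xi$ gives, for each $\xi$, the scalar fractional ODE
\begin{equation*}
(1+\lambda_\xi)\,\mathcal{D}_t^\alpha u_\xi(t)+\mu_\xi u_\xi(t)=f_\xi(t),\qquad u_\xi(0)=\varphi_\xi,
\end{equation*}
equivalently $\mathcal{D}_t^\alpha u_\xi(t)+\frac{\mu_\xi}{1+\lambda_\xi}u_\xi(t)=\frac{1}{1+\lambda_\xi}f_\xi(t)$. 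This is a classical linear Caputo fractional ODE whose unique solution is given by the Mittag-Leffler representation
\begin{equation*}
u_\xi(t)=\varphi_\xi E_{\alpha,1}\!\left(-\tfrac{\mu_\xi}{1+\lambda_\xi}t^\alpha\right)+\frac{1}{1+\lambda_\xi}\int_0^t s^{\alpha-1}E_{\alpha,\alpha}\!\left(-\tfrac{\mu_\xi}{1+\lambda_\xi}s^\alpha\right)f_\xi(t-s)\,ds,
\end{equation*}
which I would either cite from \cite{KST06} or verify directly by applying $I^\alpha$ and using termwise differentiation of the Mittag-Leffler series. Summing against $e_\xi$ yields the claimed formula; uniqueness at the level of each Fourier coefficient gives uniqueness of $u$.

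The substantive part is proving that this formal series actually lies in $L^2([0,T];\mathcal{H}^{1}_\mathcal{M})\cap W^{\alpha}([0,T];\mathcal{H}^{1}_\mathcal{L})$ under the stated hypotheses, and here is where I expect the main obstacle. One must estimate $\sum_\xi \mu_\xi^2\int_0^T|u_\xi(t)|^2\,dt$ and $\sum_\xi \lambda_\xi^2\int_0^T|\mathcal{D}_t^\alpha u_\xi(t)|^2\,dt$ (plus the $L^2$-in-time norms of $u_\xi$ and $\lambda_\xi u_\xi$). For the homogeneous part, the bound $0<E_{\alpha,1}(-z)<1$ from \eqref{EST: Mittag} controls $|u_\xi|$ by $|\varphi_\xi|$, but to get a $\mu_\xi$ (or $\lambda_\xi$) factor out front one needs the sharper decay $E_{\alpha,1}(-z)\le C/(1+z)$ from \eqref{EST: Mittag AB}: this converts the factor $\mu_\xi$ into $\mu_\xi\cdot\frac{1+\lambda_\xi}{1+\lambda_\xi+\mu_\xi t^\alpha}$, which for the $\mathcal{H}^1_\mathcal{M}$-norm is bounded using $\mu_\xi\le c_M^{-1}\mu_\xi$ trivially — more carefully, one pays a factor $t^{-\alpha}$ from $E_{\alpha,1}(-\mu_\xi t^\alpha/(1+\lambda_\xi))\le C(1+\lambda_\xi)/(\mu_\xi t^\alpha)$, and $t^{-\alpha}\in L^2([0,T])$ precisely because $\alpha<1/2$ is excluded, i.e. $2\alpha<2$ always but we need $-2\alpha>-1$... here the hypothesis $1/2<\alpha<1$ enters: $\int_0^T t^{-2\alpha}\,dt<\infty$ iff $\alpha<1/2$, so in fact one should instead keep $u_\xi$ itself in $L^2$ via the uniform bound and spend the Mittag-Leffler decay only where a spectral weight is genuinely needed, using $\varphi\in\mathcal{H}^1_\mathcal{L}\cap\mathcal{H}^1_\mathcal{M}$ directly. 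For the inhomogeneous part, I would apply the estimate $|E_{\alpha,\alpha}(-z)|\le C/(1+z)$ together with the Young/Minkowski convolution inequality in $t$, reducing $\int_0^T\big|\frac{1}{1+\lambda_\xi}\int_0^t s^{\alpha-1}E_{\alpha,\alpha}(\cdots)f_\xi(t-s)\,ds\big|^2dt$ to $\big(\frac{1}{1+\lambda_\xi}\int_0^T s^{\alpha-1}E_{\alpha,\alpha}(\cdots)\,ds\big)^2\|f_\xi\|_{L^2}^2$; evaluating the kernel integral via $\int_0^T s^{\alpha-1}E_{\alpha,\alpha}(-cs^\alpha)\,ds=\frac{1}{c}(1-E_{\alpha,1}(-cT^\alpha))\le \frac{1}{c}$ with $c=\mu_\xi/(1+\lambda_\xi)$ gives the bound $\frac{1}{\mu_\xi}\|f_\xi\|_{L^2}$, and the weight $\mu_\xi^2\cdot\mu_\xi^{-2}=1$ then requires only $f\in L^2([0,T];\mathcal{H})$ for the $\mathcal{H}^1_\mathcal{M}$-norm, while the $\mathcal{H}^1_\mathcal{L}$-norm of $\mathcal{D}_t^\alpha u$ forces the extra factor $\lambda_\xi/\mu_\xi$, which is exactly why $f\in L^2([0,T];\mathcal{H}^{-1,1}_{\mathcal{L},\mathcal{M}})$ is assumed (so that $\lambda_\xi\mu_\xi^{-1}f_\xi$ is square-summable).

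Finally, to justify that $u$ solves the equation in $\mathcal{H}$ (not merely formally) I would invoke the uniform convergence of the series in the relevant spaces to pass $\mathcal{D}_t^\alpha$ and the operators inside the sum, noting that $\mathcal{D}_t^\alpha u_\xi(t)=\frac{1}{1+\lambda_\xi}f_\xi(t)-\frac{\mu_\xi}{1+\lambda_\xi}u_\xi(t)$ by construction, so the $W^\alpha([0,T];\mathcal{H}^1_\mathcal{L})$-bound on $\mathcal{D}_t^\alpha u$ is equivalent to the already-established bounds on $u$ in $L^2([0,T];\mathcal{H}^1_\mathcal{M})$ and on $f$ in $L^2([0,T];\mathcal{H}^{-1,1}_{\mathcal{L},\mathcal{M}})$ after multiplication by $\lambda_\xi/(1+\lambda_\xi)$. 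The regularity counting is the only delicate bookkeeping; everything else is a direct application of \eqref{EST: Mittag AB}, \eqref{EST: Mittag} and convolution inequalities.
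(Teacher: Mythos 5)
Your proposal is correct and its skeleton coincides with the paper's: eigenfunction expansion, reduction to the scalar Caputo ODE \eqref{EQ DL:ODE}, the Mittag--Leffler representation of $u_\xi$, term-by-term norm estimates, and uniqueness via the homogeneous problem for each coefficient. The genuine difference is in the key estimate of the Duhamel term. The paper applies Cauchy--Schwarz in $s$ to $\int_0^t s^{\alpha-1}E_{\alpha,\alpha}(\cdot)f_\xi(t-s)\,ds$, which produces the factor $\int_0^t s^{2\alpha-2}\,ds$ and is precisely where the hypothesis $\alpha>1/2$ is consumed (see \eqref{C1: Sol1-mod}); it also leaves the weight $\tfrac{1}{1+\lambda_\xi}$ rather than $\tfrac{1}{\mu_\xi}$, which is why the paper needs $f\in L^2([0,T];\mathcal H^{-1,1}_{\mathcal L,\mathcal M})$ for the $\mathcal M u$ bound. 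You instead use Young's inequality $L^1\ast L^2\subset L^2$ together with the exact evaluation $\int_0^T s^{\alpha-1}E_{\alpha,\alpha}(-cs^\alpha)\,ds=\tfrac1c\bigl(1-E_{\alpha,1}(-cT^\alpha)\bigr)\le\tfrac1c$ (valid since the kernel is nonnegative, as the paper itself notes when rewriting \eqref{SOL DL:ODE}). This is sharper: it avoids the singular integral entirely, works for all $0<\alpha<1$, and yields $\|u_\xi^{\mathrm{inhom}}\|_{L^2}\le\mu_\xi^{-1}\|f_\xi\|_{L^2}$, so that only $f\in L^2([0,T];\mathcal H)$ is actually needed — it essentially subsumes the paper's Theorem \ref{Theorem D-L-2} without requiring $f\in W^1$. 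Two small criticisms of the writeup: first, your discussion of where $1/2<\alpha$ enters is self-contradictory (you assert $t^{-\alpha}\in L^2([0,T])$ and then correctly observe that $\int_0^T t^{-2\alpha}\,dt<\infty$ iff $\alpha<1/2$); you recover by falling back on the uniform bound \eqref{EST: Mittag} and the hypotheses $\varphi\in\mathcal H^1_{\mathcal L}\cap\mathcal H^1_{\mathcal M}$, which is what the paper does, but you never identify the actual location where the restriction $\alpha>1/2$ is used — understandably, since in your version of the estimate it is not used at all. Second, your parenthetical that the $\mathcal H^{-1,1}_{\mathcal L,\mathcal M}$ hypothesis makes $\lambda_\xi\mu_\xi^{-1}f_\xi$ square-summable has the exponents reversed: that space controls $\mu_\xi\lambda_\xi^{-1}f_\xi$, and it is needed (in the paper's scheme) for $\mathcal M u$, not for $\mathcal D_t^\alpha\mathcal L u$. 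Neither issue invalidates the argument.
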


\begin{proof} Let us first prove the existence result. Since the system of eigenfunctions $e_\xi$ is a basis in $\mathcal{H}$, we seek the function $u(t)$ in the form
\begin{equation}\label{EQ:D expansion u}
u(t)=\sum_{\xi\in\mathcal{I}}u_\xi(t)e_\xi,
\end{equation}
where $u_\xi(t)$ are unknown functions. Substituting \eqref{EQ:D expansion u} into Equations \eqref{EQ DL:Pseudo}-\eqref{CON DL:In} and taking into account
Assumption \ref{A1},
we obtain the following equations corresponding to the function $u_\xi(t)$:
\begin{equation}\label{EQ DL:ODE}
\mathcal{D}_t^\alpha u_\xi(t)+\frac{\mu_\xi}{1+\lambda_\xi}u_\xi(t)=\frac{f_\xi(t)}{1+\lambda_\xi},
\end{equation}
\begin{equation}\label{CON DL:ODE}
u_\xi(0)=\varphi_{\xi},\;\xi\in\mathcal{I}.
\end{equation}
Here $f_\xi(t)$ is the coefficient function of the expansion of $f(t)$, i.e.
\begin{equation*}\label{EQ:D expansion f}
f(t)=\sum_{\xi\in\mathcal{I}}f_\xi(t)e_\xi,
\end{equation*}
with
$$
f_\xi(t)=(f(t),e_\xi)_\mathcal{H},
$$
and $\varphi_\xi$ is the coefficient of the expansion of $\varphi$, i.e.
\begin{equation*}\label{EQ:D expansion phi}
\varphi=\sum_{\xi\in\mathcal{I}}\varphi_\xi e_\xi,
\end{equation*}
with
\begin{equation}\label{coef phi}
\varphi_\xi=(\varphi,e_\xi)_\mathcal{H}.
\end{equation}

According to \cite{LG99}, the solutions of the equation \eqref{EQ DL:ODE} satisfying the initial condition \eqref{CON DL:ODE} can be represented in the form
\begin{equation}\label{SOL DL:ODE}
\begin{split}
    u_\xi(t)&=\varphi_\xi E_{\alpha,1}\left(-\frac{\mu_\xi}{1+\lambda_\xi}t^\alpha\right)
    -\frac{1}{\mu_\xi}\int_0^t\frac{d}{ds}\left(E_{\alpha,1}\left(-\frac{\mu_\xi}{1+\lambda_\xi}s^\alpha\right)\right)f_\xi(t-s)ds.
\end{split}\end{equation}

Putting \eqref{SOL DL:ODE} into \eqref{EQ:D expansion u}, we obtain a solution of Problem \eqref{EQ DL:Pseudo}-\eqref{CON DL:In}, i.e.
\begin{equation}\label{FIN SOL DL:Pseudo}
\begin{split}
    u(t)&=\sum_{\xi\in\mathcal{I}}\varphi_\xi E_{\alpha,1}\left(-\frac{\mu_\xi}{1+\lambda_\xi}t^\alpha\right)e_\xi\\
    &-\sum_{\xi\in\mathcal{I}}\left[\frac{1}{\mu_\xi}\int_0^t\frac{d}{ds}
    \left(E_{\alpha,1}\left(-\frac{\mu_\xi}{1+\lambda_\xi}s^\alpha\right)\right)f_\xi(t-s)ds\right]e_\xi.
\end{split}
\end{equation}

Here under the  integral we have the derivative of the Mittag-Leffler function, which is a non-positive function, i.e.
\begin{equation*}
\label{ML:non positive}
\begin{split}
\frac{d}{ds}\left(E_{\alpha,1}\left(-\frac{\mu_\xi}{1+\lambda_\xi}s^\alpha\right)\right)= -\frac{\mu_\xi}{1+\lambda_\xi}s^{\alpha-1} E_{\alpha,\alpha}\left(-\frac{\mu_\xi}{1+\lambda_\xi}s^\alpha\right)\leq 0,
\end{split}
\end{equation*}
in view of 
the fact that
\begin{equation*}
\begin{split}
  E_{\alpha,1}^\prime(z)&=\frac{d}{dz}\left(\sum_{k=0}^\infty\frac{z^k}{\Gamma(\alpha k+1)}\right)=\sum_{k=1}^\infty\frac{k z^{k-1}}{\Gamma(\alpha k+1)}\\
  &=\frac{1}{\alpha}\sum_{m=0}^\infty\frac{z^m}{\Gamma(\alpha m+\alpha)}=\frac{1}{\alpha}E_{\alpha,\alpha}(z),\;z\in\mathbb{R}.
\end{split}
\end{equation*}
From this we have
\begin{equation}
\label{Eq:007}
\begin{split}
u(t)&=\sum_{\xi\in\mathcal{I}}\varphi_\xi E_{\alpha,1}\left(-\frac{\mu_\xi}{1+\lambda_\xi}t^\alpha\right)e_\xi\\
&+\sum_{\xi\in\mathcal{I}}\left[\int_0^t s^{\alpha-1} E_{\alpha,\alpha}\left(-\frac{\mu_\xi}{1+\lambda_\xi}s^\alpha\right)  \frac{f_\xi(t-s)}{1+\lambda_\xi} ds\right]e_\xi.
\end{split}
\end{equation}

Now, we prove the convergence of the obtained
infinite series corresponding to the functions $u(t),\;\mathcal{D}_t^\alpha u(t),\;\mathcal{M}u(t)$ and $\mathcal{D}_t^\alpha \mathcal{L}u(t)$.

Before we get the convergence, let us calculate $\mathcal{M}u(t),\;\mathcal{D}_t^\alpha u(t)$ and $\mathcal{D}_t^\alpha \mathcal{L}u(t)$. By using Assumption \ref{A1} in \eqref{coef phi}, we have
\begin{equation}
\label{rem}
\begin{split}
\lambda_\xi\varphi_\xi&=\lambda_\xi(\varphi,e_\xi)_\mathcal{H}
=(\varphi,\mathcal{L}e_\xi)_\mathcal{H}=(\mathcal{L}\varphi,e_\xi)_\mathcal{H};\\
\mu_\xi\varphi_\xi&=\mu_\xi(\varphi,e_\xi)_\mathcal{H}
=(\varphi,\mathcal{M}e_\xi)_\mathcal{H}=(\mathcal{M}\varphi,e_\xi)_\mathcal{H}.
\end{split}
\end{equation}
Applying the operator $\mathcal{L}$ to \eqref{FIN SOL DL:Pseudo}, and taking into account formulas \eqref{rem}, we get
\begin{equation}
\label{EQ DL:Lu}
\begin{split}
\mathcal{L}u(t)&=\sum_{\xi\in\mathcal{I}}\varphi_\xi E_{\alpha,1}\left(-\frac{\mu_\xi}{1+\lambda_\xi}t^\alpha\right)\mathcal{L}e_\xi
\\
&+\sum_{\xi\in\mathcal{I}}\left[\int_0^t s^{\alpha-1} E_{\alpha,\alpha}\left(-\frac{\mu_\xi}{1+\lambda_\xi}s^\alpha\right)  \frac{f_\xi(t-s)}{1+\lambda_\xi} ds\right]\mathcal{L}e_\xi
\\
&=\sum_{\xi\in\mathcal{I}}\lambda_\xi\varphi_\xi E_{\alpha,1}\left(-\frac{\mu_\xi}{1+\lambda_\xi}t^\alpha\right)e_\xi
\\
&+\sum_{\xi\in\mathcal{I}}\frac{\lambda_\xi}{1+\lambda_\xi}\left[\int_0^t s^{\alpha-1} E_{\alpha,\alpha}\left(-\frac{\mu_\xi}{1+\lambda_\xi}s^\alpha\right) f_\xi(t-s)  ds\right]e_\xi
\\
&=\sum_{\xi\in\mathcal{I}}(\mathcal{L}\varphi,e_\xi)_\mathcal{H}E_{\alpha,1}\left(-\frac{\mu_\xi}{1+\lambda_\xi}t^\alpha\right)e_\xi
\\
&+\sum_{\xi\in\mathcal{I}}\frac{\lambda_\xi}{1+\lambda_\xi}\left[\int_0^t s^{\alpha-1} E_{\alpha,\alpha}\left(-\frac{\mu_\xi}{1+\lambda_\xi}s^\alpha\right) f_\xi(t-s)  ds\right]e_\xi.
\end{split}
\end{equation}
Analogously, we have
\begin{equation}
\label{EQ DL:Lu2}
\begin{split}
\mathcal{M}u(t)&=\sum_{\xi\in\mathcal{I}}(\mathcal{M}\varphi,e_\xi)_\mathcal{H}E_{\alpha,1}\left(-\frac{\mu_\xi}{1+\lambda_\xi}t^\alpha\right)e_\xi\\
&+\sum_{\xi\in\mathcal{I}}\frac{\mu_\xi}{1+\lambda_\xi}\left[\int_0^t s^{\alpha-1} E_{\alpha,\alpha}\left(-\frac{\mu_\xi}{1+\lambda_\xi}s^\alpha\right) f_\xi(t-s)  ds\right]e_\xi.
\end{split}
\end{equation}

Applying the operator $\mathcal{D}_t^\alpha$ to \eqref{EQ:D expansion u}, we have
\begin{equation}\label{EQ:D expansion Du}
\mathcal{D}_t^\alpha u(t)=\sum_{\xi\in\mathcal{I}}\mathcal{D}_t^\alpha u_\xi(t)e_\xi.
\end{equation}
By using \eqref{EQ DL:ODE}, we find $\mathcal{D}_t^\alpha u_\xi(t)$
\begin{equation}
\label{EQ DL:Duh}
\begin{split}
   \mathcal{D}_t^\alpha u_\xi(t)=\frac{f_\xi(t)}{1+\lambda_\xi}-\frac{\mu_\xi}{1+\lambda_\xi}u_\xi(t).
\end{split}
\end{equation}
Putting \eqref{Eq:007} into \eqref{EQ DL:Duh}, we get
\begin{equation}\label{EQ DL:Duht}\begin{split}
   \mathcal{D}_t^\alpha u_\xi(t)&=\frac{f_\xi(t)}{1+\lambda_\xi}-\frac{\mu_\xi}{1+\lambda_\xi}\varphi_\xi E_{\alpha,1}\left(-\frac{\mu_\xi}{1+\lambda_\xi}t^\alpha\right)\\
   &+\frac{\mu_\xi}{(1+\lambda_\xi)^2}\left[\int_0^t s^{\alpha-1} E_{\alpha,\alpha}\left(-\frac{\mu_\xi}{1+\lambda_\xi}s^\alpha\right) f_\xi(t-s)  ds\right].
\end{split}\end{equation}
Substituting \eqref{EQ DL:Duht} into \eqref{EQ:D expansion Du}, we obtain
\begin{equation}
\label{EQ DL:Du}
\begin{split}
\mathcal{D}_t^\alpha u(t)&=\sum_{\xi\in\mathcal{I}}\frac{f_\xi(t)}{1+\lambda_\xi}e_\xi
-\sum_{\xi\in\mathcal{I}}\frac{\mu_\xi}{1+\lambda_\xi}\varphi_\xi E_{\alpha,1}\left(-\frac{\mu_\xi}{1+\lambda_\xi}t^\alpha\right)e_\xi\\
&+\sum_{\xi\in\mathcal{I}}\frac{\mu_\xi}{(1+\lambda_\xi)^2}\left[\int_0^t s^{\alpha-1} E_{\alpha,\alpha}\left(-\frac{\mu_\xi}{1+\lambda_\xi}s^\alpha\right) f_\xi(t-s)  ds\right]e_\xi.
\end{split}
\end{equation}
Applying the operator $\mathcal{L}$ to \eqref{EQ DL:Du} and taking into account formulas \eqref{rem}, we have
\begin{equation}
\label{EQ DL:DLu}
\begin{split}
\mathcal{D}_t^\alpha\mathcal{L} u(t)&=\sum_{\xi\in\mathcal{I}}\frac{\lambda_\xi }{1+\lambda_\xi} f_\xi(t) e_\xi-\sum_{\xi\in\mathcal{I}}\frac{\mu_\xi}{1+\lambda_\xi}(\mathcal{L}\varphi, e_\xi)_\mathcal{H} E_{\alpha,1}\left(-\frac{\mu_\xi}{1+\lambda_\xi}t^\alpha\right)e_\xi\\
&+\sum_{\xi\in\mathcal{I}}\frac{\mu_\xi\lambda_\xi}{(1+\lambda_\xi)^2}\left[\int_0^t s^{\alpha-1} E_{\alpha,\alpha}\left(-\frac{\mu_\xi}{1+\lambda_\xi}s^\alpha\right) f_\xi(t-s)  ds\right]e_\xi e_\xi.
\end{split}
\end{equation}

Now let us estimate $\mathcal{H}$-norms
\begin{equation}
\label{C1: Sol1-mod}
\begin{split}
\|u(t)\|_{\mathcal{H}}^{2} &\leq \sum_{\xi\in\mathcal{I}}\left|E_{\alpha,1}\left(-\frac{\mu_\xi}{1+\lambda_\xi}t^\alpha\right) \right|^{2} |\varphi_\xi|^{2}
\\
& + \sum_{\xi\in\mathcal{I}}\frac{1}{(1+\lambda_\xi)^{2}}\left|\int_0^t s^{\alpha-1}  E_{\alpha,\alpha}\left(-\frac{\mu_\xi}{1+\lambda_\xi}s^\alpha\right)  f_\xi(t-s) ds\right|^{2}
\\
&\leq C \sum\limits_{\xi\in\mathcal{I}} \left(\frac{1}{1+\frac{\mu_\xi}{1+\lambda_\xi}t^\alpha}\right)^{2} |\varphi_{\xi}|^{2} \\ & + C \sum\limits_{\xi\in\mathcal{I}}\frac{1}{(1+\lambda_\xi)^{2}}\left(\int\limits_0^t \frac{s^{\alpha-1}}{1+\frac{\mu_\xi}{1+\lambda_\xi}s^\alpha} |f_{\xi}(t-s)| d s\right)^{2}
\\
&\leq C \sum\limits_{\xi\in\mathcal{I}} |\varphi_{\xi}|^{2}
 + C \sum\limits_{\xi\in\mathcal{I}}\frac{1}{(1+\lambda_\xi)^{2}}\int\limits_0^t \left(\frac{s^{\alpha-1}}{1 + \frac{\mu_\xi}{1+\lambda_\xi} s^\alpha}\right)^{2}  d s \,
\int\limits_0^t |f_{\xi}(s)|^{2} d s
\\
&\leq C\|\varphi\|_\mathcal{H}^2
 + C \sum\limits_{\xi\in\mathcal{I}}\frac{1}{(1+\lambda_\xi)^{2}}\int\limits_0^t \left(\frac{1}{s^{1-\alpha} + \frac{\mu_\xi}{1+\lambda_\xi}s }\right)^{2} d s \,
\int\limits_0^t |f_{\xi}(s)|^{2} d s
\\
&\leq C\|\varphi\|_\mathcal{H}^2
 + C \sum\limits_{\xi\in\mathcal{I}}\frac{1}{(1+\lambda_\xi)^{2}}\int\limits_0^t \frac{1}{s^{2-2\alpha} }  d s \,
\int\limits_0^t |f_{\xi}(s)|^{2} d s
\end{split}
\end{equation}
Due to the assumption $\alpha>1/2$, finally, we get
$$
\|u\|_{L^{2}(0, T; \mathcal H)}^{2}\leq C(\|\varphi\|_\mathcal{H}^2
 + \|f\|^{2}_{L^{2}([0, T]; \mathcal H_{\mathcal L}^{-1})}).
$$
Here we take into account that
\begin{equation}
\label{H}
\begin{split}
\sum\limits_{\xi\in\mathcal{I}}\frac{1}{(1+\lambda_\xi)^{2}}
\int\limits_0^t |f_{\xi}(s)|^{2} d s &= 
\int\limits_0^t \sum\limits_{\xi\in\mathcal{I}} \left|\frac{f_{\xi}(s)}{1+\lambda_\xi}\right|^{2} d s
\\
&=
\sum\limits_{\xi\in\mathcal{I}}
\int\limits_0^t \|f(s)\|^{2}_{\mathcal H_{\mathcal L}^{-1}} d s \leq C \|f\|^{2}_{L^{2}([0, T]; \mathcal H_{\mathcal L}^{-1})},
\end{split}
\end{equation}
for some constant $C>0$.

Finally, by using \eqref{EST: Mittag} and arguing as in \eqref{C1: Sol1-mod}, from \eqref{FIN SOL DL:Pseudo}--\eqref{EQ DL:DLu} we get the following estimates
\begin{equation*}
\begin{split}
    \|\mathcal{L}u\|_{L^{2}([0,T],\mathcal{H})}^2&\leq C(\|\mathcal{L}\varphi\|_\mathcal{H}^2 + \|f\|_{L^{2}([0,T],\mathcal{H})}^2),
\end{split}
\end{equation*}

\begin{equation*}
\begin{split}
    \|\mathcal{M}u\|_{L^{2}([0,T],\mathcal{H})}^2&\leq C(\|\mathcal{M}\varphi\|_\mathcal{H}^2 + \|f\|_{L^{2}([0,T],\mathcal{H}_{\mathcal L, \mathcal M}^{-1, 1})}^2),
\end{split}
\end{equation*}

\begin{equation*}
\begin{split}
  \|\mathcal{D}_t^\alpha u\|_{L^{2}([0,T],\mathcal{H})}^2&\leq C(\|\varphi\|_{\mathcal{H}^{-1, 1}_{\mathcal L, \mathcal M}}^2 +\|f\|_{L^{2}([0,T],\mathcal{H}_{\mathcal L}^{-1})}^2+\|f\|_{L^{2}([0,T],\mathcal{H}_{\mathcal L, \mathcal M}^{-2, 1})}^2),
\end{split}
\end{equation*}
and
\begin{equation*}
\begin{split}
\|\mathcal{D}_t^\alpha\mathcal{L} u\|_{L^{2}([0,T],\mathcal{H})}^2&\leq C(\|\varphi\|_{\mathcal{H}^{1}_{\mathcal M}}^2 +\|f\|_{L^{2}([0,T],\mathcal{H})}^2 +\|f\|_{L^{2}([0,T],\mathcal{H}_{\mathcal L, \mathcal M}^{-1, 1})}^2),
\end{split}
\end{equation*}
respectively. Here in all our estimates in the spaces $\mathcal{H}_{\mathcal L, \mathcal M}^{l, m}$ for some $l, m\in\mathbb R$ we play with the argument as in \eqref{H}. Thus, we finish the proof of the existence result.

{\it Proof of the uniqueness of the solution.} Let $w(t)$ and $v(t)$ be two solutions of Problem \eqref{EQ DL:Pseudo}-\eqref{CON DL:In}, i.e.
$$
\mathcal{D}_t^\alpha w(t)+\mathcal{D}_t^\alpha\mathcal{L}w(t)+\mathcal{M}w(t)=f(t),
$$
$$
w(0)=\varphi,
$$
$$
\mathcal{D}_t^\alpha v(t)+\mathcal{D}_t^\alpha\mathcal{L}v(t)+\mathcal{M}v(t)=f(t),
$$
$$
v(0)=\varphi.
$$
By subtracting these equations from each other, and denoting $u(t)=w(t)-v(t)$, we obtain
\begin{equation}
\label{EQ:HOM P}
\mathcal{D}_t^\alpha u(t)+\mathcal{D}_t^\alpha\mathcal{L}u(t)+\mathcal{M}u(t)=0,
\end{equation}
\begin{equation}
\label{CON:HOM P}
u(0)=0.
\end{equation}
We also have
\begin{equation}
\label{uxi}
u_\xi(t)=(u(t),e_\xi)_\mathcal{H},\;\xi\in\mathcal{I}.
\end{equation}
Applying the operator $\mathcal{D}_t^\alpha$ to \eqref{uxi}, we
have
\begin{equation}
\label{Duxi}
\mathcal{D}_t^\alpha u_\xi(t)=(\mathcal{D}_t^\alpha u(t),e_\xi)_\mathcal{H},\;\xi\in\mathcal{I}.
\end{equation}

From \eqref{EQ:HOM P}--\eqref{CON:HOM P}, we have
\begin{equation}
\label{EQ:HOM ODE}
\mathcal{D}_t^\alpha u_\xi(t)+\frac{\mu_\xi}{1+\lambda_\xi}u_\xi(t)=0,
\end{equation}
\begin{equation}
\label{CON:HOM ODE}
u_\xi(0)=0.
\end{equation}
By the formula \eqref{SOL DL:ODE}, when $\varphi_\xi=0,\;f_\xi(t)=0$, the solution of the problem \eqref{EQ:HOM ODE}--\eqref{CON:HOM ODE} is $u_\xi(t)\equiv 0$.

Further, by the basis property of the system $\{e_\xi\}_{\xi\in\mathcal I}$ in $\mathcal{H}$, we obtain $u(t)\equiv 0$. The uniqueness of the solution of Problem \eqref{EQ DL:Pseudo}--\eqref{CON DL:In} is proved.
\end{proof}

\subsection{Case II: $0<\alpha<1$} 
Here we deal with the case when $0<\alpha<1$. But for this we will require more conditions on source term.

\begin{theorem}
\label{Theorem D-L-2}
Let $0<\alpha<1.$ Suppose that Assumption \ref{A1} holds. Let $\varphi\in \mathcal{H}^{1}_\mathcal{L}\cap\mathcal{H}^{1}_\mathcal{M}$ and $f\in W^{1}([0,T];\mathcal{H})$.
Then there exists a unique solution $u(t)$ of Problem \eqref{EQ DL:Pseudo}-\eqref{CON DL:In} such that $u\in L^2([0,T]; \mathcal{H}^{1}_\mathcal{M}) \cap W^{\alpha}([0,T]; \mathcal{H}^{1}_\mathcal{L})$. This solution can be written in the form
\begin{equation*}\begin{split}
    u(t)&=\sum_{\xi\in\mathcal{I}}\varphi_\xi E_{\alpha,1}\left(-\frac{\mu_\xi}{1+\lambda_\xi}t^\alpha\right)e_\xi
    \\
    &+\sum_{\xi\in\mathcal{I}}\frac{f_\xi(t)}{\mu_\xi} e_\xi
   - \sum_{\xi\in\mathcal{I}}\frac{f_\xi(0)}{\mu_\xi} E_{\alpha,1}\left(-\frac{\mu_\xi}{1+\lambda_\xi}t^\alpha\right) e_\xi
   \\
   & - \sum_{\xi\in\mathcal{I}}\left[\int_0^t \frac{f'_{\xi}(t-s)}{\mu_\xi} E_{\alpha,1}\left(-\frac{\mu_\xi}{1+\lambda_\xi}s^\alpha\right) ds\right] e_\xi,
\end{split}\end{equation*}
where $\varphi_\xi=(\varphi,e_\xi)_\mathcal{H}$, $f_\xi(t)=(f(t),e_\xi)_\mathcal{H}$.
\end{theorem}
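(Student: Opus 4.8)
The plan is to run the argument of Theorem~\ref{Theorem D-L} almost verbatim, inserting one integration by parts in the Duhamel term; this is precisely the device that trades the constraint $\alpha>1/2$ for the stronger hypothesis $f\in W^1([0,T];\mathcal{H})$.

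First I would look for $u$ in the form $u(t)=\sum_{\xi\in\mathcal{I}}u_\xi(t)e_\xi$ and, using Assumption~\ref{A1}, reduce to the scalar fractional Cauchy problems \eqref{EQ DL:ODE}--\eqref{CON DL:ODE}, whose solution is \eqref{SOL DL:ODE}. The Duhamel term in \eqref{SOL DL:ODE} is $-\tfrac1{\mu_\xi}\int_0^t\tfrac{d}{ds}(E_{\alpha,1}(-\tfrac{\mu_\xi}{1+\lambda_\xi}s^\alpha))f_\xi(t-s)\,ds$; since $f\in W^1([0,T];\mathcal{H})$ each $f_\xi=(f,e_\xi)_{\mathcal{H}}$ belongs to $H^1(0,T)$, so we may integrate by parts in $s$. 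Using $E_{\alpha,1}(0)=1$, the boundary terms give $\tfrac{f_\xi(0)}{\mu_\xi}E_{\alpha,1}(-\tfrac{\mu_\xi}{1+\lambda_\xi}t^\alpha)-\tfrac{f_\xi(t)}{\mu_\xi}$, while the surviving integral is $\int_0^t\tfrac{f'_\xi(t-s)}{\mu_\xi}E_{\alpha,1}(-\tfrac{\mu_\xi}{1+\lambda_\xi}s^\alpha)\,ds$; substituting into $u=\sum_{\xi\in\mathcal{I}}u_\xi e_\xi$ gives exactly the series in the statement.

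The second part is the convergence of the series for $u$, $\mathcal{M}u$, $\mathcal{L}u$ and $\mathcal{D}_t^\alpha\mathcal{L}u$ in $L^2([0,T];\mathcal{H})$. I would use $0<E_{\alpha,1}(-z)<1$ from \eqref{EST: Mittag} throughout. The two features that make the argument work for every $\alpha\in(0,1)$ are: (i) the new representation of $u_\xi$ carries no singular kernel $s^{\alpha-1}$, only the uniformly bounded factor $E_{\alpha,1}(-\tfrac{\mu_\xi}{1+\lambda_\xi}s^\alpha)$, so all the time integrals are dispatched by the plain Cauchy--Schwarz inequality on $[0,T]$; (ii) the weight $1/\mu_\xi$ sits next to $f_\xi$, $f_\xi(0)$ and $f'_\xi$, so that applying $\mathcal{M}$ turns $\mu_\xi\cdot\tfrac1{\mu_\xi}$ into $1$ and $\mu_\xi\varphi_\xi$ into $(\mathcal{M}\varphi,e_\xi)_{\mathcal{H}}$ by \eqref{rem}. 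In this way $\|u\|_{L^2([0,T];\mathcal{H})}^2$ and $\|\mathcal{M}u\|_{L^2([0,T];\mathcal{H})}^2$ are bounded by $C(\|\varphi\|_{\mathcal{H}^1_{\mathcal{M}}}^2+\|f\|_{W^1([0,T];\mathcal{H})}^2)$, where $f(0)\in\mathcal{H}$ with $\|f(0)\|_{\mathcal{H}}\le C\|f\|_{W^1([0,T];\mathcal{H})}$ by the one-dimensional embedding $H^1(0,T)\hookrightarrow C([0,T])$, and where $\mu_\xi\ge c_M>0$ handles the $1/\mu_\xi$ terms. For $\mathcal{L}u$ I would \emph{not} use the new formula, since that would create the ratio $\lambda_\xi/\mu_\xi$ which Assumption~\ref{A1} does not bound; instead I would revert to the original representation \eqref{EQ DL:Lu}: its Duhamel prefactor is $\tfrac{\lambda_\xi}{1+\lambda_\xi}\le1$, and the kernel $s^{\alpha-1}E_{\alpha,\alpha}(-\tfrac{\mu_\xi}{1+\lambda_\xi}s^\alpha)$ has $L^1(0,T)$-norm bounded uniformly in $\xi$ by $T^\alpha/\Gamma(\alpha+1)$, so a weighted Cauchy--Schwarz (Young's convolution inequality) gives $\|\mathcal{L}u\|_{L^2([0,T];\mathcal{H})}^2\le C(\|\mathcal{L}\varphi\|_{\mathcal{H}}^2+\|f\|_{L^2([0,T];\mathcal{H})}^2)$ for all $\alpha\in(0,1)$. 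Finally, from \eqref{EQ DL:ODE} one has $\lambda_\xi\mathcal{D}_t^\alpha u_\xi=\tfrac{\lambda_\xi}{1+\lambda_\xi}f_\xi-\tfrac{\lambda_\xi}{1+\lambda_\xi}\mu_\xi u_\xi$, whence $|\lambda_\xi\mathcal{D}_t^\alpha u_\xi|\le|f_\xi|+|\mu_\xi u_\xi|$; since $\mathcal{L}$ and $\mathcal{D}_t^\alpha$ act diagonally they commute, so $\mathcal{D}_t^\alpha\mathcal{L}u=\mathcal{L}\mathcal{D}_t^\alpha u$, and the bound for $\mathcal{M}u$ yields $\|\mathcal{D}_t^\alpha\mathcal{L}u\|_{L^2([0,T];\mathcal{H})}^2\le C(\|\varphi\|_{\mathcal{H}^1_{\mathcal{M}}}^2+\|f\|_{W^1([0,T];\mathcal{H})}^2)$. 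These estimates justify the termwise action of $\mathcal{L}$, $\mathcal{M}$ and $\mathcal{D}_t^\alpha$, show that $u$ solves \eqref{EQ DL:Pseudo}--\eqref{CON DL:In}, and place $u$ in $L^2([0,T];\mathcal{H}^1_{\mathcal{M}})\cap W^\alpha([0,T];\mathcal{H}^1_{\mathcal{L}})$.

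Uniqueness is exactly as in Theorem~\ref{Theorem D-L}: the difference of two solutions satisfies the homogeneous problem, each coefficient solves $\mathcal{D}_t^\alpha u_\xi+\tfrac{\mu_\xi}{1+\lambda_\xi}u_\xi=0$, $u_\xi(0)=0$, hence $u_\xi\equiv0$ by \eqref{SOL DL:ODE}, and $u\equiv0$ follows from the basis property of $\{e_\xi\}_{\xi\in\mathcal{I}}$. I expect the only real obstacle to be the bookkeeping in the convergence step — specifically, recognizing that $\mathcal{L}u$ must be estimated from the pre-integration-by-parts formula \eqref{EQ DL:Lu} to avoid the unbounded ratio $\lambda_\xi/\mu_\xi$, and pinning down exactly why $f\in W^1([0,T];\mathcal{H})$ (rather than merely $f\in L^2$) is required, namely to give meaning to $f(0)$ and to control the convolution against $f'_\xi$.
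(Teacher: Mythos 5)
Your proposal is correct and follows the paper's proof in its essential structure: reduce to the scalar problems \eqref{EQ DL:ODE}--\eqref{CON DL:ODE}, start from \eqref{SOL DL:ODE}, integrate by parts in the Duhamel term to remove the singular kernel $s^{\alpha-1}$ at the price of requiring $f\in W^1([0,T];\mathcal{H})$, and then run termwise $\mathcal{H}$-norm estimates using $0<E_{\alpha,1}(-z)<1$ and Cauchy--Schwarz, with uniqueness exactly as in Theorem \ref{Theorem D-L}.

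The one place where you genuinely depart from the paper is the estimate of $\mathcal{L}u$. The paper applies $\mathcal{L}$ to the post-integration-by-parts representation \eqref{FIN SOL DL:Pseudo-2}, which produces the factor $\lambda_\xi/\mu_\xi$, and records the resulting bound in the weighted norm $\|f\|_{W^{1}([0,T],\mathcal{H}_{\mathcal L, \mathcal M}^{1,-1})}$; under Assumption \ref{A1} alone this ratio need not be bounded, so that norm is not visibly controlled by the hypothesis $f\in W^1([0,T];\mathcal{H})$. Your alternative --- reverting to the pre-integration-by-parts formula \eqref{EQ DL:Lu}, where the prefactor is $\lambda_\xi/(1+\lambda_\xi)\le 1$, and using Young's convolution inequality with the uniform $L^1$ bound $\int_0^T s^{\alpha-1}E_{\alpha,\alpha}(-\tfrac{\mu_\xi}{1+\lambda_\xi}s^\alpha)\,ds\le T^\alpha/\Gamma(\alpha+1)$ --- closes this for every $\alpha\in(0,1)$ using only $f\in L^2([0,T];\mathcal{H})$, and is arguably tighter than what the paper writes. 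Likewise, deriving the $\mathcal{D}_t^\alpha\mathcal{L}u$ bound directly from the equation via $|\lambda_\xi\mathcal{D}_t^\alpha u_\xi|\le|f_\xi|+\mu_\xi|u_\xi|$ is a shortcut that replaces the paper's explicit series \eqref{EQ DL:DLu-2} but lands on the same estimate. No gaps.
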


\begin{proof} By repeating the arguments of Theorem \ref{Theorem D-L}, we start from the formula \eqref{SOL DL:ODE}. For the last term of the equation \eqref{SOL DL:ODE}, we have
\begin{equation}\label{SOL DL:ODE+}
\begin{split}
     -\frac{1}{\mu_\xi}&\int_0^t\frac{d}{ds} \left(E_{\alpha,1}\left(-\frac{\mu_\xi}{1+\lambda_\xi}s^\alpha\right)\right)f_\xi(t-s)ds
    \\
    = & - \frac{1}{\mu_\xi} E_{\alpha,1}\left(-\frac{\mu_\xi}{1+\lambda_\xi}s^\alpha\right) f_\xi(t-s)\Big|_{0}^{t}
    \\
    & - \frac{1}{\mu_\xi}\int_0^t E_{\alpha,1}\left(-\frac{\mu_\xi}{1+\lambda_\xi}s^\alpha\right) f'_{\xi}(t-s)ds
    \\
    = &\frac{1}{\mu_\xi} f_\xi(t) - \frac{1}{\mu_\xi} E_{\alpha,1}\left(-\frac{\mu_\xi}{1+\lambda_\xi}t^\alpha\right) f_\xi(0)
    \\
    & - \frac{1}{\mu_\xi}\int_0^t E_{\alpha,1}\left(-\frac{\mu_\xi}{1+\lambda_\xi}s^\alpha\right) f'_{\xi}(t-s)ds.
\end{split}
\end{equation}
Thus, for the solution of the Cauchy problem
\begin{equation*}
\mathcal{D}_t^\alpha u_\xi(t)+\frac{\mu_\xi}{1+\lambda_\xi}u_\xi(t)=\frac{f_\xi(t)}{1+\lambda_\xi}, \,\,\,\, u_\xi(0)=\varphi_{\xi},
\end{equation*}
we have
\begin{equation}\label{SOL DL:ODE++}
\begin{split}
    u_\xi(t)&=\varphi_\xi E_{\alpha,1}\left(-\frac{\mu_\xi}{1+\lambda_\xi}t^\alpha\right)+\frac{1}{\mu_\xi} f_\xi(t)
   \\ &- \frac{1}{\mu_\xi} E_{\alpha,1}\left(-\frac{\mu_\xi}{1+\lambda_\xi}t^\alpha\right) f_\xi(0)\\& - \frac{1}{\mu_\xi}\int_0^t E_{\alpha,1}\left(-\frac{\mu_\xi}{1+\lambda_\xi}s^\alpha\right) f'_{\xi}(t-s)ds,
\end{split}
\end{equation}
for all $\xi\in\mathcal{I}.$

Putting \eqref{SOL DL:ODE++} into \eqref{EQ:D expansion u}, we obtain the solution of Problem \eqref{EQ DL:Pseudo}-\eqref{CON DL:In} in the following form
\begin{equation}
\label{FIN SOL DL:Pseudo-2}
\begin{split}
    u(t)&=\sum_{\xi\in\mathcal{I}}\varphi_\xi E_{\alpha,1}\left(-\frac{\mu_\xi}{1+\lambda_\xi}t^\alpha\right)e_\xi
    \\
    &+\sum_{\xi\in\mathcal{I}}\frac{f_\xi(t)}{\mu_\xi} e_\xi
   - \sum_{\xi\in\mathcal{I}}\frac{f_\xi(0)}{\mu_\xi} E_{\alpha,1}\left(-\frac{\mu_\xi}{1+\lambda_\xi}t^\alpha\right) e_\xi
   \\
   & - \sum_{\xi\in\mathcal{I}}\left[\int_0^t \frac{f'_{\xi}(t-s)}{\mu_\xi} E_{\alpha,1}\left(-\frac{\mu_\xi}{1+\lambda_\xi}s^\alpha\right) ds\right] e_\xi.
\end{split}
\end{equation}

To prove the convergence of the obtained
infinite series corresponding to the functions $\mathcal{L} u(t)$, $\mathcal{M} u(t)$, $\mathcal{D}_t^\alpha u(t)$ and $\mathcal{D}_t^\alpha \mathcal{L} u(t)$, first, we need to calculate them.

Applying the operator $\mathcal{L}$ to \eqref{FIN SOL DL:Pseudo}, and taking into account formulas \eqref{rem}, we get
\begin{equation}
\label{EQ DL:Lu-2}
\begin{split}
\mathcal{L}u(t)&
=\sum_{\xi\in\mathcal{I}}(\mathcal{L}\varphi,e_\xi)_\mathcal{H} E_{\alpha,1}\left(-\frac{\mu_\xi}{1+\lambda_\xi}t^\alpha\right)e_\xi
    \\
    &+\sum_{\xi\in\mathcal{I}}\frac{\lambda_\xi }{\mu_\xi} f_\xi(t) e_\xi
   - \sum_{\xi\in\mathcal{I}}\frac{\lambda_\xi }{\mu_\xi} f_\xi(0) E_{\alpha,1}\left(-\frac{\mu_\xi}{1+\lambda_\xi}t^\alpha\right) e_\xi
   \\
   & - \sum_{\xi\in\mathcal{I}}\frac{\lambda_\xi }{\mu_\xi}\left[\int_0^t  f'_{\xi}(t-s) E_{\alpha,1}\left(-\frac{\mu_\xi}{1+\lambda_\xi}s^\alpha\right) ds\right] e_\xi.
\end{split}
\end{equation}
Analogously, we have
\begin{equation}
\label{EQ DL:Lu2-2}
\begin{split}
\mathcal{M}u(t)&=\sum_{\xi\in\mathcal{I}}(\mathcal{M}\varphi,e_\xi)_\mathcal{H}E_{\alpha,1}\left(-\frac{\mu_\xi}{1+\lambda_\xi}t^\alpha\right)e_\xi
   \\
    & + \sum_{\xi\in\mathcal{I}} f_\xi(t) e_\xi
    - \sum_{\xi\in\mathcal{I}} f_\xi(0) E_{\alpha,1}\left(-\frac{\mu_\xi}{1+\lambda_\xi}t^\alpha\right) e_\xi
   \\
   & - \sum_{\xi\in\mathcal{I}} \left[\int_0^t  f'_{\xi}(t-s) E_{\alpha,1}\left(-\frac{\mu_\xi}{1+\lambda_\xi}s^\alpha\right) ds\right] e_\xi.
\end{split}
\end{equation}

Applying the operator $\mathcal{D}_t^\alpha$ to \eqref{EQ:D expansion u}, we have
\begin{equation}
\label{EQ:D expansion Du-2}
\mathcal{D}_t^\alpha u(t)=\sum_{\xi\in\mathcal{I}}\mathcal{D}_t^\alpha u_\xi(t)e_\xi.
\end{equation}
By using \eqref{EQ DL:ODE}, we find $\mathcal{D}_t^\alpha u_\xi(t)$
\begin{equation}
\label{EQ DL:Duh-2}
\begin{split}
\mathcal{D}_t^\alpha u_\xi(t)=\frac{f_\xi(t)}{1+\lambda_\xi}-\frac{\mu_\xi}{1+\lambda_\xi}u_\xi(t).
\end{split}
\end{equation}
Putting \eqref{SOL DL:ODE} into \eqref{EQ DL:Duh-2}, we get
\begin{equation}
\label{EQ DL:Duht-2}
\begin{split}
   \mathcal{D}_t^\alpha u_\xi(t) = & - \frac{\mu_\xi}{1+\lambda_\xi}\varphi_\xi E_{\alpha,1}\left(-\frac{\mu_\xi}{1+\lambda_\xi}t^\alpha\right)
   \\
   & + \frac{1}{1+\lambda_\xi} E_{\alpha,1}\left(-\frac{\mu_\xi}{1+\lambda_\xi}t^\alpha\right) f_\xi(0)
   \\
   & + \frac{1}{1+\lambda_\xi} \int_0^t E_{\alpha,1}\left(-\frac{\mu_\xi}{1+\lambda_\xi}s^\alpha\right) f'_{\xi}(t-s)ds.
\end{split}
\end{equation}
Substituting \eqref{EQ DL:Duht-2} into \eqref{EQ:D expansion Du-2}, we obtain
\begin{equation}
\label{EQ DL:Du-2}
\begin{split}
\mathcal{D}_t^\alpha u(t) = &
- \sum_{\xi\in\mathcal{I}}\frac{\mu_\xi}{1+\lambda_\xi}\varphi_\xi E_{\alpha,1}\left(-\frac{\mu_\xi}{1+\lambda_\xi}t^\alpha\right)e_\xi
\\
& + \sum_{\xi\in\mathcal{I}}\frac{1}{1+\lambda_\xi} E_{\alpha,1}\left(-\frac{\mu_\xi}{1+\lambda_\xi}t^\alpha\right) f_\xi(0) e_\xi
\\
& + \sum_{\xi\in\mathcal{I}}\left[\frac{1}{1+\lambda_\xi} \int_0^t E_{\alpha,1}\left(-\frac{\mu_\xi}{1+\lambda_\xi}s^\alpha\right) f'_{\xi}(t-s)ds\right]e_\xi.
\end{split}
\end{equation}
Applying the operator $\mathcal{L}$ to \eqref{EQ DL:Du-2} and taking into account formulas \eqref{rem}, we have
\begin{equation}
\label{EQ DL:DLu-2}
\begin{split}
\mathcal{D}_t^\alpha\mathcal{L} u(t)&=-\sum_{\xi\in\mathcal{I}}\frac{\mu_\xi}{1+\lambda_\xi}(\mathcal{L}\varphi, e_\xi)_\mathcal{H} E_{\alpha,1}\left(-\frac{\mu_\xi}{1+\lambda_\xi}t^\alpha\right)e_\xi
\\
&+\sum_{\xi\in\mathcal{I}}\frac{\lambda_\xi}{1+\lambda_\xi} E_{\alpha,1}\left(-\frac{\mu_\xi}{1+\lambda_\xi}t^\alpha\right) f_\xi(0)  e_\xi
\\
&+\sum_{\xi\in\mathcal{I}}\left[\frac{\lambda_\xi}{1+\lambda_\xi}
\int_0^t E_{\alpha,1}\left(-\frac{\mu_\xi}{1+\lambda_\xi}s^\alpha\right) f'_{\xi}(t-s)ds\right]e_\xi.
\end{split}
\end{equation}

Now, let us estimate $\mathcal H$-norms
\begin{equation}\label{C1: Sol1-mod-2}
\begin{split}
\|u(t)\|_{\mathcal{H}}^{2} &\leq \sum_{\xi\in\mathcal{I}}\left|E_{\alpha,1}\left(-\frac{\mu_\xi}{1+\lambda_\xi}t^\alpha\right) \right|^{2} |\varphi_\xi|^{2}
\\
&+\sum_{\xi\in\mathcal{I}}\left|\frac{f_\xi(t)}{\mu_\xi}\right|^{2} +\sum_{\xi\in\mathcal{I}}\left|\frac{f_\xi(0)}{\mu_\xi}\right|^{2}\left|E_{\alpha,1}\left(-\frac{\mu_\xi}{1+\lambda_\xi}t^\alpha\right) \right|^{2}
\\
&+\sum_{\xi\in\mathcal{I}}\left|\int_0^t \frac{f'_{\xi}(t-s)}{\mu_\xi} E_{\alpha,1}\left(-\frac{\mu_\xi}{1+\lambda_\xi}s^\alpha\right) ds\right|^{2}
\\
&
\leq C \sum\limits_{\xi\in\mathcal{I}} \left(\frac{1}{1+\frac{\mu_\xi}{1+\lambda_\xi}t^\alpha}\right)^{2} |\varphi_{\xi}|^{2}
\\
&+\sum_{\xi\in\mathcal{I}}\left|\frac{f_\xi(t)}{\mu_\xi}\right|^{2} +C \sum\limits_{\xi\in\mathcal{I}} \left(\frac{1}{1+\frac{\mu_\xi}{1+\lambda_\xi}t^\alpha}\right)^{2}\left|\frac{f_\xi(0)}{\mu_\xi}\right|^{2}
\\
& + C \sum\limits_{\xi\in\mathcal{I}}\frac{1}{\mu_\xi^2}\left(\int\limits_0^t \frac{1}{1+\frac{\mu_\xi}{1+\lambda_\xi}s^\alpha} |f'_{\xi}(t-s)| d s\right)^{2}
\end{split}
\end{equation}

\begin{equation*}
\begin{split}
&\leq C \sum\limits_{\xi\in\mathcal{I}} |\varphi_{\xi}|^{2}
+C\sum_{\xi\in\mathcal{I}}\left|\frac{f_\xi(t)}{\mu_\xi}\right|^{2}
\\
& + C \sum\limits_{\xi\in\mathcal{I}}\frac{1}{\mu_\xi^2}\int\limits_0^T \left(\frac{1}{1+\frac{\mu_\xi}{1+\lambda_\xi}s^\alpha}\right)^{2}  d s \,
\int\limits_0^T |f'_{\xi}(s)|^{2} d s
\\
&\leq C\|\varphi\|_\mathcal{H}^2 + C\|f(t)\|_{\mathcal{H}^{-1}_{\mathcal M}}^2 +  C \|f\|^{2}_{W^{1}(0, T; \mathcal H^{-1}_{\mathcal M})}.
\end{split}
\end{equation*}
Finally, we obtain
$$
\|u\|_{L^{2}(0, T; \mathcal H)}^{2}\leq C(\|\varphi\|_\mathcal{H}^2 +
  \|f\|^{2}_{W^{1}(0, T; \mathcal H^{-1}_{\mathcal M})}).
$$

By using \eqref{EST: Mittag} and arguing as in \eqref{C1: Sol1-mod-2}, from \eqref{FIN SOL DL:Pseudo-2}--\eqref{EQ DL:DLu-2} we get the following estimates
\begin{equation*}
\begin{split}
    \|\mathcal{L}u\|_{L^{2}([0,T],\mathcal{H})}^2&\leq C\|\mathcal{L}\varphi\|_\mathcal{H}^2+C\|f\|_{W^{1}([0,T],\mathcal{H}_{\mathcal L, \mathcal M}^{1, -1})}^2,
\end{split}
\end{equation*}

\begin{equation*}
\begin{split}
    \|\mathcal{M}u\|_{L^{2}([0,T],\mathcal{H})}^2&\leq C\|\mathcal{M}\varphi\|_\mathcal{H}^2+C\|f\|_{W^{1}([0,T],\mathcal{H})}^2,
\end{split}
\end{equation*}

\begin{equation*}
\begin{split}
  \|\mathcal{D}_t^\alpha u\|_{L^{2}([0,T],\mathcal{H})}^2 \leq C(\|\varphi\|_{\mathcal{H}^{-1, 1}_{\mathcal L, \mathcal M}}^2+\|f\|_{W^{1}([0,T],\mathcal{H}^{-1}_{\mathcal L})}^2),
\end{split}
\end{equation*}
and
\begin{equation*}
\begin{split}
\|\mathcal{D}_t^\alpha\mathcal{L} u\|_{L^{2}([0,T],\mathcal{H})}^2
\leq C(\|\varphi\|_{\mathcal{H}^{1}_{\mathcal M}}^2+\|f\|_{W^{1}([0,T],\mathcal{H})}^2),
\end{split}
\end{equation*}
respectively. It proves the existence result.

The proof of the uniqueness of the solution of Theorem \ref{Theorem D-L-2} is the same as in the case of Theorem \ref{Theorem D-L}.
\end{proof}

\section{Inverse problem}

This section is concerned with an inverse problem for
the pseudo-parabolic equation \eqref{EQ:PP}. We obtain existence and uniqueness results for this problem, by using the $\mathcal{L}$--Fourier method.
\begin{problem}
Find a pair of functions $(u(t), f)$ satisfying the inverse problem \eqref{EQ:PP}-\eqref{C:FIN}.
\end{problem}

Let us define $\gamma:=\max\{0, \kappa - 1\},$ where $\kappa$ is from Assumption \ref{A2}. A generalised solution of the inverse problem \eqref{EQ:PP}--\eqref{C:FIN} is the pair of functions $(u(t), f)$, where
$u\in C^{\alpha}([0,T];\mathcal H^{1+\gamma}_{\mathcal L} \bigcap \mathcal H^{1+\gamma}_{\mathcal M})$, and $f\in\mathcal H$.

For Problem \eqref{EQ:PP}--\eqref{C:FIN} the following statement holds true.

\begin{theorem} \label{Th}
Suppose that Assumptions \ref{A1} and \ref{A2} hold. Let $\varphi,\psi\in\mathcal H^{1+\gamma}_{\mathcal L} \bigcap \mathcal H^{1+\gamma}_{\mathcal M}$. Then the generalised solution of the inverse problem \eqref{EQ:PP}--\eqref{C:FIN} exists,
is unique, and can be written in the form
\begin{equation*}\begin{split}
u(t)=\varphi
&+\sum_{\xi\in\mathcal{I}}\frac{[(\psi, e_\xi)_\mathcal{H}-(\varphi, e_\xi)_\mathcal{H}] \, \left(1-E_{\alpha,1}(-\frac{\mu_\xi}{1+\lambda_\xi} t^\alpha)\right) \, e_\xi}{\left(1-E_{\alpha,1}(-\frac{\mu_\xi}{1+\lambda_\xi}T^\alpha)\right)},
\end{split}\end{equation*}
\begin{equation*}\begin{split}
f=\mathcal{M}\varphi
&+\sum_{\xi\in\mathcal{I}}\frac{[(\mathcal{M}\psi,e_\xi)_\mathcal{H}-(\mathcal{M}\varphi,e_\xi)_\mathcal{H}]\, e_\xi}
{1-E_{\alpha,1}(-\frac{\mu_\xi}{1+\lambda_\xi}T^\alpha)}.
\end{split}\end{equation*}
\end{theorem}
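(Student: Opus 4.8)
The plan is to run the $\mathcal{L}$--Fourier method exactly as in Theorems \ref{Theorem D-L} and \ref{Theorem D-L-2}; the one genuinely new structural point is that the source $f$ is \emph{time-independent}, so each coefficient $f_\xi=(f,e_\xi)_\H$ is an unknown \emph{constant}, to be determined jointly with $u$ from the two conditions \eqref{C:IN} and \eqref{C:FIN}. Writing $u(t)=\sum_{\xi\in\mathcal{I}}u_\xi(t)e_\xi$ and projecting \eqref{EQ:PP} onto $e_\xi$, Assumption \ref{A1} yields the decoupled scalar two-point problems
\begin{equation*}
\mathcal{D}_t^\alpha u_\xi(t)+\frac{\mu_\xi}{1+\lambda_\xi}u_\xi(t)=\frac{f_\xi}{1+\lambda_\xi},\qquad u_\xi(0)=\varphi_\xi,\qquad u_\xi(T)=\psi_\xi .
\end{equation*}
Because the forcing is constant, the representation \eqref{SOL DL:ODE++} collapses to $u_\xi(t)=\varphi_\xi E_{\alpha,1}(-\frac{\mu_\xi}{1+\lambda_\xi}t^\alpha)+\frac{f_\xi}{\mu_\xi}\bigl(1-E_{\alpha,1}(-\frac{\mu_\xi}{1+\lambda_\xi}t^\alpha)\bigr)$. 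Imposing $u_\xi(T)=\psi_\xi$ and solving for the scalar $f_\xi$ — which is legitimate since $1-E_{\alpha,1}(-\frac{\mu_\xi}{1+\lambda_\xi}T^\alpha)>0$ by \eqref{EST: Mittag}, so there is no solvability obstruction at the level of each eigenmode — gives $f_\xi=\mu_\xi\,[\psi_\xi-\varphi_\xi E_{\alpha,1}(-\frac{\mu_\xi}{1+\lambda_\xi}T^\alpha)]/[1-E_{\alpha,1}(-\frac{\mu_\xi}{1+\lambda_\xi}T^\alpha)]$. Splitting $\psi_\xi-\varphi_\xi E_{\alpha,1}(\cdot)=(\psi_\xi-\varphi_\xi)+\varphi_\xi(1-E_{\alpha,1}(\cdot))$ in both $f_\xi$ and $u_\xi(t)$, and using $\mu_\xi\varphi_\xi=(\mathcal{M}\varphi,e_\xi)_\H$, $\mu_\xi\psi_\xi=(\mathcal{M}\psi,e_\xi)_\H$ as in \eqref{rem}, converts these into exactly the two closed-form series of the statement. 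So the derivation of the formulas is purely formal.

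The substantive part is convergence, and this is where Assumption \ref{A2} and the exponent $\gamma=\max\{0,\kappa-1\}$ come in. The decisive quantitative input is a bound for the ``resolvent factor'': applying the lower Mittag--Leffler estimate in \eqref{EST: Mittag AB} with $z=\frac{\mu_\xi}{1+\lambda_\xi}T^\alpha$ gives
\begin{equation*}
\frac{1}{1-E_{\alpha,1}\!\left(-\frac{\mu_\xi}{1+\lambda_\xi}T^\alpha\right)}\;\le\;1+\frac{\Gamma(1+\alpha)}{T^\alpha}\cdot\frac{1+\lambda_\xi}{\mu_\xi}\;\le\;C\bigl(1+\mu_\xi^{\gamma}\bigr),
\end{equation*}
where the last step uses $|\lambda_\xi|=O(|\mu_\xi|^\kappa)$ and $\mu_\xi\ge c_M>0$. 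Note that the one-sided bound \eqref{EST: Mittag} is not enough here: when $\kappa>1$ this factor genuinely grows with $\mu_\xi$, which is precisely why one must require the additional $\gamma$ derivatives on $\varphi$ and $\psi$.

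With this in hand I would estimate, uniformly for $t\in[0,T]$, the $\H$-norm of $f$ and the $\H$-norms of $u(t)$, $\mathcal{L}^{1+\gamma}u(t)$, $\mathcal{M}^{1+\gamma}u(t)$, $\mathcal{D}_t^\alpha u(t)$, $\mathcal{L}^{1+\gamma}\mathcal{D}_t^\alpha u(t)$ and $\mathcal{M}^{1+\gamma}\mathcal{D}_t^\alpha u(t)$, using $0<E_{\alpha,1}(-\frac{\mu_\xi}{1+\lambda_\xi}t^\alpha)\le1$, the monotone decay of $s\mapsto E_{\alpha,1}(-s)$ (already exploited via $E_{\alpha,1}'=\tfrac1\alpha E_{\alpha,\alpha}$ in the proof of Theorem \ref{Theorem D-L}), which gives $[1-E_{\alpha,1}(-\frac{\mu_\xi}{1+\lambda_\xi}t^\alpha)]/[1-E_{\alpha,1}(-\frac{\mu_\xi}{1+\lambda_\xi}T^\alpha)]\le1$, and the resolvent bound above; for the $\mathcal{D}_t^\alpha$-terms one either differentiates the series termwise using $\mathcal{D}_t^\alpha E_{\alpha,1}(-\omega t^\alpha)=-\omega E_{\alpha,1}(-\omega t^\alpha)$ or rewrites $\mathcal{D}_t^\alpha u=(I+\mathcal{L})^{-1}(f-\mathcal{M}u)$ from the equation. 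Everything should be controlled by a constant times $\|\varphi\|^2_{\mathcal{H}^{1+\gamma}_{\mathcal L}\cap\mathcal{H}^{1+\gamma}_{\mathcal M}}+\|\psi\|^2_{\mathcal{H}^{1+\gamma}_{\mathcal L}\cap\mathcal{H}^{1+\gamma}_{\mathcal M}}$. Since each scalar term is continuous in $t$, the Weierstrass $M$-test upgrades these uniform bounds to $u\in C^\alpha([0,T];\mathcal{H}^{1+\gamma}_{\mathcal L}\cap\mathcal{H}^{1+\gamma}_{\mathcal M})$ and $f\in\H$, and termwise substitution then confirms that the constructed pair solves \eqref{EQ:PP}--\eqref{C:FIN}.

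For uniqueness I would proceed as in Theorem \ref{Theorem D-L}: the difference of two solutions solves the homogeneous problem $\mathcal{D}_t^\alpha[u+\mathcal{L}u]+\mathcal{M}u=f$, $u(0)=0$, $u(T)=0$; projecting onto $e_\xi$ gives the scalar two-point problem above with $\varphi_\xi=\psi_\xi=0$, whence the formula for $f_\xi$ forces $f_\xi=0$ and then $u_\xi\equiv0$; by completeness of $\{e_\xi\}_{\xi\in\mathcal{I}}$ we get $u\equiv0$ and $f=0$. I expect the only genuinely delicate point to be the bookkeeping in the convergence step — tracking exactly how many powers of $\mu_\xi$ (hence how much of the $(1+\gamma)$-smoothness of the data) each term consumes, especially the $\mathcal{D}_t^\alpha$-terms, where the factor $\frac{\mu_\xi}{1+\lambda_\xi}$ multiplies the resolvent factor; the remainder is a direct transcription of the arguments used for the direct problem.
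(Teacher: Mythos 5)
Your proposal is correct and follows essentially the same route as the paper: the $\mathcal{L}$--Fourier reduction to the scalar two-point problems, the representation $u_\xi(t)=\frac{f_\xi}{\mu_\xi}+C_\xi E_{\alpha,1}(-\frac{\mu_\xi}{1+\lambda_\xi}t^\alpha)$, the determination of $f_\xi$ from the final condition, the resolvent-type lower bound on $1-E_{\alpha,1}(-\frac{\mu_\xi}{1+\lambda_\xi}T^\alpha)$ via \eqref{EST: Mittag AB} combined with $|\lambda_\xi|=O(|\mu_\xi|^\kappa)$ (which is exactly the paper's \eqref{EST: Mittag B}--\eqref{EST: Mittag B2} and the source of the exponent $\gamma$), and the completeness argument for uniqueness. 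The only cosmetic differences are that you obtain the constant-forcing representation by specializing \eqref{SOL DL:ODE++} rather than positing the ansatz \eqref{General} directly, and you additionally note the monotonicity bound $[1-E_{\alpha,1}(-\frac{\mu_\xi}{1+\lambda_\xi}t^\alpha)]/[1-E_{\alpha,1}(-\frac{\mu_\xi}{1+\lambda_\xi}T^\alpha)]\le 1$; neither changes the substance.
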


\begin{proof} {\it Existence.} Since the system $\{e_\xi\}_{\xi\in\mathcal I}$ is a basis in the space $\mathcal{H}$, we expand the functions $u(t)$ and $f$ as follows:
\begin{equation}\label{EQ: expansion u}
u(t)=\sum_{\xi\in\mathcal{I}}u_\xi(t)e_\xi,
\end{equation}
and
\begin{equation}\label{EQ: expansion f}
f=\sum_{\xi\in\mathcal{I}}f_\xi e_\xi,
\end{equation}
where $u_\xi(t)$ and $f_\xi$ are
\begin{equation*}
\begin{split}
    u_\xi(t)&=(u(t),e_\xi)_\mathcal{H},\;\xi\in\mathcal{I},\\
    f_\xi&=(f,e_\xi)_\mathcal{H},\;\xi\in\mathcal{I}.
\end{split}
\end{equation*}

Substituting \eqref{EQ: expansion u} and \eqref{EQ: expansion f} into the equations \eqref{EQ:PP}--\eqref{C:FIN} and using the relations
$$
\mathcal{L}e_\xi=\lambda_\xi e_\xi, \,\,\, \mathcal{M}e_\xi=\mu_\xi e_\xi,
$$
we get the following problem for the functions $u_\xi(t)$ and for the constants $f_\xi,\,\,\xi\in\mathcal{I}$:
\begin{equation}\label{EQ IP:ODE}
\mathcal D_t^\alpha u_\xi(t)+\frac{\mu_\xi}{1+\lambda_\xi} u_\xi(t)=\frac{f_\xi}{1+\lambda_\xi},
\end{equation}
\begin{equation}\label{CON IP:ODE IN}
   u_\xi(0)=\varphi_\xi,
\end{equation}
\begin{equation}\label{CON IP:ODE FIN}
u_\xi(T)=\psi_\xi,
\end{equation}
for $t\in[0,T]$ and for any $\xi\in\mathcal{I}$. Where $\varphi_\xi,\;\psi_\xi$ are the coefficients of the expansions of $\varphi,\;\psi$, i.e.
\begin{eqnarray}\label{EQ:D expansion phi+psi}
\varphi=\sum_{\xi\in\mathcal{I}}\varphi_\xi e_\xi, \,\,\,
\psi=\sum_{\xi\in\mathcal{I}}\psi_\xi e_\xi,
\end{eqnarray}
given by
\begin{equation}\label{coef phi+psi}
\varphi_\xi=(\varphi, e_\xi)_\mathcal{H}, \,\,\, \psi_\xi=(\psi, e_\xi)_\mathcal{H}.
\end{equation}

We seek a general solution of Problem \eqref{EQ IP:ODE}--\eqref{CON IP:ODE FIN} in the following form
\begin{equation}\label{General}
u_\xi(t)=\frac{f_\xi}{\mu_\xi}+C_\xi E_{\alpha,1}(-\frac{\mu_\xi}{1+\lambda_\xi} t^\alpha),
\end{equation}
where the constants $C_\xi, f_\xi$ are unknown. By using the conditions \eqref{C:IN}--\eqref{C:FIN} we can find them.

We first find $C_\xi$:
$$
u_\xi(0)=\frac{f_\xi}{\mu_\xi}+C_\xi=\varphi_\xi,
$$
$$
u_\xi(T)=\frac{f_\xi}{\mu_\xi}+C_\xi E_{\alpha,1}(-\frac{\mu_\xi}{1+\lambda_\xi}T^\alpha)=\psi_\xi,
$$
$$
\varphi_\xi-C_\xi+C_\xi E_{\alpha,1}(-\frac{\mu_\xi}{1+\lambda_\xi} T^\alpha)=\psi_\xi.
$$
Then
$$
C_\xi=\frac{\varphi_\xi-\psi_\xi}{1-E_{\alpha,1}(-\frac{\mu_\xi}{1+\lambda_\xi} T^\alpha)}.
$$
Then $f_\xi$ is represented as
$$
f_\xi=\mu_\xi \varphi_\xi-\mu_\xi C_\xi.
$$
Substituting $f_\xi,\,u_\xi(t)$ into the expansions \eqref{EQ: expansion u} and \eqref{EQ: expansion f}, we find
$$
u(t)=\varphi+\sum_{\xi\in\mathcal{I}}C_\xi\left(E_{\alpha,1}(-\frac{\mu_\xi}{1+\lambda_\xi}t^\alpha)-1\right)e_\xi,
$$
$$
f=\sum_{\xi\in\mathcal{I}}\mu_\xi\varphi_\xi e_\xi-\sum_{\xi\in\mathcal{I}}\mu_\xi C_\xi e_\xi.
$$
By using the self-adjointness of the operator $\mathcal{M}$,
$$
(\mathcal{M}\varphi,e_\xi)_\mathcal{H}=(\varphi,\mathcal{M}e_\xi)_\mathcal{H},
$$
and using $\mathcal{M}e_\xi=\mu_\xi e_\xi$, we obtain
$$
(\varphi, e_\xi)_\mathcal{H}=\frac{(\mathcal{M}\varphi, e_\xi)_\mathcal{H}}{\mu_\xi}, \,\,\, (\psi, e_\xi)_\mathcal{H}=\frac{(\mathcal{M}\psi, e_\xi)_\mathcal{H}}{\mu_\xi}.
$$
Substituting these identities into the formula of $C_\xi$, we get that
$$
C_\xi=\frac{(\mathcal M\varphi, e_\xi)_\mathcal{H}-(\mathcal M\psi, e_\xi)_\mathcal{H}}{\mu_\xi\left(1-E_{\alpha,1}(-\frac{\mu_\xi}{1+\lambda_\xi} T^\alpha)\right)}.
$$
Then, formally, one obtains
\begin{equation}
\label{EQ: U Solution pseudo}
u(t)=\varphi+\sum_{\xi\in\mathcal{I}}
\frac{[(\psi,e_\xi)_\mathcal{H}-(\varphi,e_\xi)_\mathcal{H}]\, \left(1-E_{\alpha,1} (-\frac{\mu_\xi}{1+\lambda_\xi} t^\alpha)\right)\, e_\xi}{\left(1-E_{\alpha,1} (-\frac{\mu_\xi}{1+\lambda_\xi} T^\alpha)\right)},
\end{equation}
\begin{equation}
\label{EQ: f Solution pseudo}
f=\mathcal{M}\varphi+\sum_{\xi\in\mathcal{I}}
\frac{[(\mathcal{M}\psi,e_\xi)_\mathcal{H}-(\mathcal{M}\varphi,e_\xi)_\mathcal{H}]\,  e_\xi}{1 - E_{\alpha,1} (-\frac{\mu_\xi}{1+\lambda_\xi} T^\alpha)}.
\end{equation}

Since $T>T_0\geq 0,\;T_0=const$, for denominators of \eqref{EQ: U Solution pseudo} and \eqref{EQ: f Solution pseudo}, the following estimate holds true by \eqref{EST: Mittag AB},
\begin{equation}
\label{EST: Mittag B}
\begin{split}
1-E_{\alpha,1} (-\frac{\mu_\xi}{1+\lambda_\xi} T^\alpha)&\geq  1-\frac{1}{1+\frac{\mu_\xi}{1+\lambda_\xi} T^\alpha\Gamma(1+\alpha)^{-1}}
\\
&=\frac{\frac{\mu_\xi}{1+\lambda_\xi} T^\alpha\Gamma(1+\alpha)^{-1}}{1+\frac{\mu_\xi}{1+\lambda_\xi} T^\alpha\Gamma(1+\alpha)^{-1}}\\& =
\frac{\Gamma(1+\alpha)^{-1}}{\frac{1+\lambda_\xi}{\mu_\xi T^\alpha}+\Gamma(1+\alpha)^{-1}}\\&\geq M>0.
\end{split}
\end{equation}

Here, by Assumption \ref{A2} we have $|\lambda_\xi|=O(|\mu_\xi|^\kappa)$ as $|\xi|\to\infty$ for some $\kappa>0$. In the case if $\kappa\leq 1$ the estimate \eqref{EST: Mittag B} makes a sense. Now, suppose that $\kappa > 1$. Then, we have
\begin{equation}
\label{EST: Mittag B2}
\begin{split}
   |\mu_\xi|^{\kappa-1}(1-E_{\alpha,1} (-\frac{\mu_\xi}{1+\lambda_\xi} T^\alpha))\geq |\mu_\xi|^{\kappa-1}\frac{\Gamma(1+\alpha)^{-1}}{\frac{1+\lambda_\xi}{\mu_\xi T^\alpha}+\Gamma(1+\alpha)^{-1}}\geq M>0.
\end{split}
\end{equation}

According to \cite{LG99}, we have
\begin{equation}\label{FORMULA:Derivative E}
\mathcal{D}_t^\alpha\left(E_{\alpha,1}(-\lambda t^\alpha)\right)=-\lambda E_{\alpha,1}(-\lambda t^\alpha).
\end{equation}
Now, we prove the convergence of the obtained
infinite series corresponding to the functions $u(t),\;\mathcal{D}_t^\alpha u(t),\;\mathcal{M}u(t),\;\mathcal{D}_t^\alpha \mathcal{L}u(t)$, and $f$.

Before we get the convergence, let us calculate $\mathcal{D}_t^\alpha u(t),\;\mathcal{M}u(t)$ and $\mathcal{D}_t^\alpha \mathcal{L}u(t)$.
Applying the operator $\mathcal{D}_t^\alpha$ to \eqref{EQ: U Solution pseudo}, and using \eqref{FORMULA:Derivative E}, we have
\begin{equation}\label{DERIV: by t}
\begin{split}
\mathcal{D}_t^\alpha u(t)&=\sum_{\xi\in\mathcal{I}}
\frac{[(\mathcal{M}\psi,e_\xi)_\mathcal{H}-(\mathcal{M}\varphi,e_\xi)_\mathcal{H}]\,\mathcal{D}_t^\alpha\left(1-E_{\alpha,1} (-\frac{\mu_\xi}{1+\lambda_\xi} t^\alpha)\right)\, e_\xi}{\mu_\xi\left(1-E_{\alpha,1} (-\frac{\mu_\xi}{1+\lambda_\xi} T^\alpha)\right)}\\
&=\sum_{\xi\in\mathcal{I}}
\frac{[(\mathcal{M}\psi, e_\xi)_\mathcal{H}-(\mathcal{M}\varphi, e_\xi)_\mathcal{H}]\, E_{\alpha,1} (-\frac{\mu_\xi}{1+\lambda_\xi} t^\alpha)\, e_\xi}{(1+\lambda_\xi)\,\left(1-E_{\alpha,1} (-\frac{\mu_\xi}{1+\lambda_\xi} T^\alpha)\right)}.
\end{split}
\end{equation}
Applying the operators $\mathcal{L}$ and $\mathcal{M}$ to \eqref{EQ: U Solution pseudo} and taking into account \eqref{rem}, we have
$$
\mathcal{L}u(t)=\mathcal{L}\varphi+\sum_{\xi\in\mathcal{I}}
\frac{[(\mathcal{L}\psi,e_\xi)_\mathcal{H}-(\mathcal{L}\varphi,e_\xi)_\mathcal{H}]\,\left(1-E_{\alpha,1} (-\frac{\mu_\xi}{1+\lambda_\xi} t^\alpha)\right)\, e_\xi}{1-E_{\alpha,1} (-\frac{\mu_\xi}{1+\lambda_\xi} T^\alpha)},
$$
\begin{equation}\label{DERIV: by x}
\begin{split}
\mathcal{M}u(t)=\mathcal{M}\varphi+\sum_{\xi\in\mathcal{I}}
\frac{[(\mathcal{M}\psi,e_\xi)_\mathcal{H}-(\mathcal{M}\varphi,e_\xi)_\mathcal{H}]\,\left(1-E_{\alpha,1} (-\frac{\mu_\xi}{1+\lambda_\xi} t^\alpha)\right)\, e_\xi}{1-E_{\alpha,1} (-\frac{\mu_\xi}{1+\lambda_\xi} T^\alpha)},
\end{split}
\end{equation}
respectively.

Now, applying the operator $\mathcal{D}_t^\alpha$ to the first equality in \eqref{DERIV: by x}, and taking into account formulas \eqref{rem}, we have
\begin{equation}\label{DERIV: by xt}
\begin{split}
\mathcal{D}_t^\alpha\mathcal{L}u(t)&=\sum_{\xi\in\mathcal{I}}
\frac{\mu_\xi[(\mathcal{L}\psi, e_\xi)_\mathcal{H}-(\mathcal{L}\varphi, e_\xi)_\mathcal{H}] \, E_{\alpha,1} (-\frac{\mu_\xi}{1+\lambda_\xi} t^\alpha)\, e_\xi}{(1+\lambda_\xi) \, \left(1-E_{\alpha,1} (-\frac{\mu_\xi}{1+\lambda_\xi} T^\alpha)\right)}.
\end{split}
\end{equation}

Let us recall that $\gamma =\max\{0, \kappa - 1\}.$
By using the formulas \eqref{EQ: U Solution pseudo}--\eqref{DERIV: by xt}, and taking into account estimates \eqref{EST: Mittag}, we get the following estimates
\begin{equation}
\label{first}
\|u\|_{C([0,T],\mathcal{H})}^2 \leq C(\|\varphi\|_\mathcal{H}^2 + \|\varphi\|_{\mathcal H_{\mathcal M}^{\gamma}}^2 + \|\psi\|_{\mathcal H_{\mathcal M}^{\gamma}}^2),
\end{equation}
$$
\|\mathcal{M} u\|_{C([0,T],\mathcal{H})}^2 \leq C(\|\varphi\|_{\mathcal H_{\mathcal M}^{1}}^2 + \|\varphi\|_{\mathcal H_{\mathcal M}^{1+\gamma}}^2 + \|\psi\|_{\mathcal H_{\mathcal M}^{1+\gamma}}^2),
$$
$$
\|\mathcal D_t^\alpha u\|_{C([0,T],\mathcal{H})}^2 \leq C(\|\varphi\|_{\mathcal H_{\mathcal L, \mathcal M}^{-1, 1+\gamma}}^2 + \|\psi\|_{\mathcal H_{\mathcal L, \mathcal M}^{-1, 1+\gamma}}^2),
$$
$$
\|\mathcal D_t^\alpha\mathcal{L}u\|_{C([0,T],\mathcal{H})}^2 \leq C(\|\varphi\|_{\mathcal H_{\mathcal M}^{1+\gamma}}^2 + \|\psi\|_{\mathcal H_{\mathcal M}^{1+\gamma}}^2).
$$
For clarity, we only show the first estimate. By taking the $\mathcal{H}$-norm from the both sides of the representation \eqref{EQ: U Solution pseudo}, we obtain
\begin{equation}
\label{EQ: U Solution pseudo_H-est}
\begin{split}
\|u(t)\|_{\mathcal H}^{2}\leq\|\varphi\|_{\mathcal H}^{2}&+\sum_{\xi\in\mathcal{I}}\left[|\varphi_\xi|^{2}+|\psi_\xi|^{2}\right]
\left|\frac{\left(1-E_{\alpha,1} (-\frac{\mu_\xi}{1+\lambda_\xi} t^\alpha)\right)}{\left(1-E_{\alpha,1} (-\frac{\mu_\xi}{1+\lambda_\xi} T^\alpha)\right)}\right|^{2}.
\end{split}
\end{equation}
Now, by using the estimates \eqref{EST: Mittag}, \eqref{EST: Mittag B} and \eqref{EST: Mittag B2}, we get
\begin{equation}
\label{EQ: U Solution pseudo_H-est_2}
\begin{split}
\|u(t)\|_{\mathcal H}^{2}&\leq\|\varphi\|_{\mathcal H}^{2}+C_1\sum_{\xi\in\mathcal{I}}|\mu_\xi|^{2\gamma}\left[|\varphi_\xi|^{2}+|\psi_\xi|^{2}\right]
\\
&\leq C(\|\varphi\|_\mathcal{H}^2 + \|\varphi\|_{\mathcal H_{\mathcal M}^{\gamma}}^2 + \|\psi\|_{\mathcal H_{\mathcal M}^{\gamma}}^2),
\end{split}
\end{equation}
for some constants $C_1>0$ and $C>0$. Thus, we finish the proof of \eqref{first}.

Similarly, for the source term $f$, one obtains the estimate
\begin{eqnarray*}
\|f\|_\mathcal{H}^2\leq C(\|\varphi\|_{\mathcal H_{\mathcal M}^{1}}^2 + \|\varphi\|_{\mathcal H_{\mathcal M}^{1+\gamma}}^2 + \|\psi\|_{\mathcal H_{\mathcal M}^{1+\gamma}}^2).
\end{eqnarray*}

Existence of the solution of Problem \eqref{EQ:PP}-\eqref{C:FIN} is proved.

{\it Proof of the uniqueness result.} Let us suppose that $\{u_1(t),f_1\}$ and $\{u_2(t),f_2\}$ are solution of the Problem \eqref{EQ:PP}-\eqref{C:FIN}. Let $u(t)=u_1(t)-u_2(t)$ and
$f=f_1-f_2$. Then $u(t)$ and $f$ satisfy
\begin{equation}\label{EQ: pseudo with hom con}
\mathcal D_t^\alpha[u(t)+\mathcal{L}u(t)]+\mathcal{M}u(t)=f,
\end{equation}

\begin{equation}\label{EQ: IN-CON-Pseudo 0}
u(0)=0,
\end{equation}

\begin{equation}\label{EQ: END-CON-Pseudo 0}
u(T)=0.
\end{equation}
We also have
\begin{equation}\label{PHE-1}
u_\xi(t)=(u(t),e_\xi)_{\mathcal H},\,\,\, \xi\in\mathcal I,
\end{equation}
and
\begin{equation}\label{PHE-2}
f_\xi=(f, e_\xi)_{\mathcal H},\,\,\, \xi\in\mathcal I.
\end{equation}
Applying the operator $\mathcal D_t^\alpha$ to \eqref{PHE-1}, we have
$$
\mathcal (1+\lambda_\xi)\mathcal D_t^\alpha u_\xi(t)=(\mathcal D_t^\alpha [u(t)+\mathcal{L} u(t)],e_\xi)_{\mathcal H}=(-\mathcal{M}u(t)+f, e_\xi)_{\mathcal H}=-\mu_\xi u_\xi(t)+f_\xi.
$$

Thus, we get the problem with homogeneous conditions. The general solution of this equation has the form \eqref{General}.
Using the homogeneous conditions $u_\xi(0)=0$ and $u_\xi(T)=0$ we obtain
$$
f_\xi=0\,\,\,\textrm{and}\,\,\,  u_\xi(t)\equiv0.
$$
Further, by the completeness of the system $\{e_\xi\}_{\xi\in\mathcal I}$ in $\mathcal H$, we obtain $f\equiv0, u(t)\equiv0.$
Uniqueness of the solution of Problem \eqref{EQ:PP}-\eqref{C:FIN} is proved.
\end{proof}

\end{document}